\begin{document}

\newcommand{\commA}[2][]{\todo[#1,color=yellow]{A: #2}}
\newcommand{\commI}[2][]{\todo[#1,color=green!60]{I: #2}}

\theoremstyle{plain}
\newtheorem{Th}{Theorem}[section]
\newtheorem{Lemma}[Th]{Lemma}
\newtheorem{Cor}[Th]{Corollary}
\newtheorem{Prop}[Th]{Proposition}

 \theoremstyle{definition}
\newtheorem{Def}[Th]{Definition}
\newtheorem{Conj}[Th]{Conjecture}
\newtheorem{Rem}[Th]{Remark}
\newtheorem{?}[Th]{Problem}
\newtheorem{Ex}[Th]{Example}

\newcommand{\im}{\operatorname{im}}
\newcommand{\Hom}{{\rm{Hom}}}
\newcommand{\diam}{{\rm{diam}}}
\newcommand{\ovl}{\overline}

\title[Semigorup orbits and multiplicative orders]{ On semigroup orbits of polynomials  and multiplicative orders}

\author[Jorge Mello]{Jorge Mello}

\address{UNSW} 

\email{j.mello@unsw.edu.au}

 \subjclass[2010]{Primary}

 \keywords{}

\begin{abstract} We show, under some natural restrictions, that some semigroup orbits of polynomials
cannot contain too many elements of small multiplicative order modulo a large prime
$p$, extending previous work of Shparlinski (2017).
\end{abstract}

\maketitle
\section{introduction}
Let $K$ be a field and $\overline{K}$ its algebraic closure. Let $\mathcal{F}= \{ \phi_1,..., \phi_k \} \subset K[X]$ be a set of polynomials of degree at least $2$, let $x \in K$, and let 
\begin{center}$\mathcal{O}_{\mathcal{F}}(x)= \{ \phi_{i_n} \circ ... \circ \phi_{i_1}(x) | n \in \mathbb{N} , i_j =1,...,k. \}$\end{center} denote the forward orbit of $P$ under $\mathcal{F}$.

For a prime $p$ and an integer $s \geq 1$, let $\mathbb{F}_{p^s}$ denote the finite field of $p^s$ elements. Then, for $w \in  \mathbb{F}_{p^s}$ and $\phi_1,..., \phi_k$ defined over $\mathbb{F}_{p^s}$
 \begin{center}
  $T(w):= \# \mathcal{O}_{\mathcal{F}}(w)\leq p^s$.
 \end{center}
 For each $u \in \overline{\mathbb{F}_p}^*$, we define the multiplicative order $\tau(u)$ as the smallest integer $l \geq 1$ such that $u^l=1$.

When $k=1$ and $\epsilon>0$ is arbitrary, Shparlinski \cite{S} proved the bound
\begin{center}
 $\# \{ n \leq N-1 : \tau(f^{(n)}(x)) \leq t \} \leq O(\max \{N^{1/2}, N/ \log \log p  \})$,
\end{center} for all $ x \in \overline{\mathbb{F}}_p$, $p$ prime and $t  \leq (\log p)^{1/2 - \epsilon}$, where $f$ is not linear conjugate to monomials neither to Chebyshev polynomials.

In this paper we seek to generalise results of this sort when the dynamical systems are generated as  semigroups under composition by several maps $\phi_i$  not linear conjugate to monomials neither to Chebyshev polynomials.
 initially. Precisely, putting $\mathcal{F}_n= \{\phi_{i_n}\circ ... \circ \phi_{i_1}| 1 \leq i_j \leq k  \}$ for the $n$-level set, $\epsilon >0$, and  $h \geq 3l$ integers  satisfying that $B(k,h)\geq ck^h$ ( $c$ is a positive constant) and
\begin{center}
  $\# \{ v \in  \overline{\mathbb{F}}_p| \tau (v) \leq t, v = f(u), f \in \mathcal{F}_n, n \leq N   \} \geq B(k,h)$
 \end{center} for each $N$,
 we prove among other results that
\begin{align*}
\# \{ v \in   \overline{\mathbb{F}}_p |\tau (v) \leq t, v = f(u), f \in &\mathcal{F}_n, n \leq N   \} \\ &\ll_{l, \mathcal{F}} \max \left\{ \dfrac{B(k,h)^{l+1}}{ h} ,  \dfrac{ B(k,h)^{l+1}}{ \log \log p} \right\}, 
\end{align*} for all $ x \in \overline{\mathbb{F}}_p$, $p$ prime and $t  \leq (\log p)^{1/2 - \epsilon}$.
Namely, if the number of orbit points of iteration order at most $N$ and multiplicative order at most $t$ is bigger than a multiple of the size of the complete $k$-tree of depth $h-1$, then such pursued number is upper bounded in terms of $B(k,h)$ and the charateristic of the field as stated above.

For doing this, we use recent results of Ostafe and Young \cite{OY} about the finiteness of cyclotomic algebraic points that are preperiodic for $\mathcal{F}$ and that fall on the set of roots of unity, and of graph theory due to M\'{e}rai and Shparlinski \cite{MS}.

Sections \ref{sec2}, \ref{sec3} and \ref{sec5} are devoted to preliminar notation and results. Section \ref{sec4} contains results for points in orbits generated by sequences of polynomials from the initial set, and Section \ref{sec6} contains the result for the full semigroup orbit.

\section{Preliminar notations}\label{sec2} Let $K$ be a field  and $\overline{K}$ its algebraic closure. For $K$ a number field and $x \in K $, the naive logarithmic height $h(x)$ is given by 
\begin{center}$ \sum_{v \in M_K} \dfrac{[K_v: \mathbb{Q}_v]}{[K:\mathbb{Q}]} \log(\max \{1, |x|_v\}$, \end{center}
where $M_K$ is the set of places of $K$, $M_K^\infty$ is the set of archimedean (infinite) places of $K$, $M_K^0$ is the set of nonarchimedean (finite) places of $K$, and for each $v \in M_K$, $|.|_v$ denotes the corresponding absolute value on $K$ whose restriction to $\mathbb{Q}$ gives the usual $v$-adic absolute value on $\mathbb{Q}$.
Also, we write $K_v$ for the completion of $K$ with respect to $|.|$, and we let $\mathbb{C}_v$ denote the completion of an algebraic closure of $K_v$. To simplify notation, we let $d_v=[K_v:\mathbb{Q}_v]/[K:\mathbb{Q}]$.

For an arbitrary field $K$, let $\mathcal{F}= \{ \phi_1,..., \phi_k \} \subset K[X]$ be a dynamical system of polynomials of degree at least $2$. For $x \in K$,  let \begin{center} $\mathcal{O}_{\mathcal{F}}(x)= \{ \phi_{i_n} \circ ... \circ \phi_{i_1}(x) | n \in \mathbb{N} , i_j =1,...,k. \}$ \end{center} denote the forward orbit of $P$ under $\mathcal{F}$.

We set $J=\{ 1,...,k \}, W= \prod_{i=1}^\infty J$, and $\Phi_w:=(\phi_{w_j})_{j=1}^\infty$ to be a sequence of polynomials from $\mathcal{F}$ for $w= (w_j)_{j=1}^\infty \in W$. 
In this situation we let \begin{center}$\Phi_w^{(n)}=\phi_{w_n} \circ ... \circ \phi_{w_1}$ with $\Phi_w^{(0)}=$Id, 
and also $\mathcal{F}_n :=\{ \Phi_w^{(n)} | w \in W \}$.\end{center}

Precisely, we consider polynomials sequences $\Phi$ $= (\phi_{i_j})_{j=1}^\infty \in \prod_{i=1}^\infty \mathcal{F}$ and $x \in \overline{K}$,
denoting \begin{center}$\Phi^{(n)}(x):=\phi_{i_n}(\phi_{i_{n-1}}(...(\phi_{i_1}(x)))$.\end{center} The set \begin{align*}\{ x, \Phi^{(1)}(x),  \Phi^{(2)}(x),  \Phi^{(3)}(x),... \} 
 =\{ x, \phi_{i_1}(x), \phi_{i_2}(\phi_{i_1}(x)), \phi_{i_3}(\phi_{i_2}(\phi_{i_1}(x)),... \}\end{align*} is called the forward orbit of $x$ under $\Phi$, denoted by
$\mathcal{O}_{\Phi} (x)$. 

The point $x$ is said to be $\Phi$-preperiodic if $\mathcal{O}_{\Phi} (x)$ is finite.

 For a $x \in K$, the $\mathcal{F}$-orbit of $x$ is defined as 
\begin{align*}
 \mathcal{O}_{\mathcal{F}}(x)=\{ \phi(x) | \phi \in \bigcup_{n \geq 1} \mathcal{F}_n \}= \{ \Phi_w^{(n)}(x) | n \geq 0, w \in W \} = \bigcup_{w \in W} \mathcal{O}_{\Phi_w} (x).
 \end{align*}
 The point $x$ is called preperiodic for $\mathcal{F}$ if $\mathcal{O}_{\mathcal{F}}(x)$ is finite.
 
 We let $S$ be the \textit{shift map} which sends $\Psi$ $=(\psi_i)_{i=1}^\infty$ to \begin{center}$S(\Psi) =(\psi_{i+1})_{i=1}^\infty$.\end{center}
 
 For $\mathcal{S} \subset K$ and an integer $N \geq 1$, we use $T_{x,\Phi}(N, \mathcal{S})$ to denote the number of $n \leq N$ with $\Phi^{(n)}(w) \in \mathcal{S}$, namely, 
 \begin{center}
  $T_{x,\Phi}(N, \mathcal{S}) = \# \{ n \leq N | \Phi^{(n)}(x) \in \mathcal{S} \}$.
 \end{center}
 For $f = \sum_{i=0}^d a_i X^i \in \overline{\mathbb{Q}}[X]$ and $K$ a number field containing all the coefficients of $f$, denote the weil height of $f$ by 
 \begin{center}
  $h(f) = \sum_{v \in M_K} d_v \log(\max_i |a_i|_v)$,
 \end{center} and for the system of polynomials $\mathcal{F}= \{ \phi_1,..., \phi_k \}$, denote $h(\mathcal{F})=\max_i h({\phi_i})$. 
 
We revisit the following bound calculated in other works, for example, \cite[Proposition 3.3]{M}.

 \begin{Prop}\label{prop2.1}
  Let $\mathcal{F}= \{ \phi_1,..., \phi_k \}$ be a finite set of polynomials over  a number field $K$ with $\deg \phi_i= d_i \geq 2$, and $d:= \max_i d_i$. Then for all $n \geq 1$ and $\phi \in \mathcal{F}_n$, we have
  \begin{center}
   $h(\phi) \leq (\dfrac{d^n-1}{d-1})h(\mathcal{F}) + d^2(\dfrac{d^{n-1}-1}{d-1})\log 8= O(d^n(h(\mathcal{F})+1))$.
  \end{center}
 \end{Prop}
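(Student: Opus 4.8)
The plan is to derive the bound from a single--composition height inequality and then induct on $n$, taking care to expand each $\phi\in\mathcal F_n$ from the \emph{inside} so that the accumulated error terms form a geometric series.

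\emph{Step 1: one composition.} I would first record that for polynomials $f,g\in\overline K[X]$ with $e:=\deg f\ge 1$ and $m:=\deg g\ge 2$ one has
\[
h(f\circ g)\ \le\ e\,h(g)+h(f)+e\,m\log 8,
\]
which is a convenient form of the estimate in \cite[Proposition~3.3]{M}. For the proof one enlarges $K$ to contain all coefficients of $f$ and $g$, writes $f=\sum_{j=0}^e a_jX^j$ so that $f\circ g=\sum_{j=0}^e a_j g^{\,j}$, and works place by place. At a non--archimedean $v$, Gauss's lemma gives $|g^{\,j}|_v=|g|_v^{\,j}$, hence $|f\circ g|_v\le|f|_v\max(1,|g|_v)^e$ with no correction. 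At an archimedean $v$, sub--multiplicativity of the $\ell^1$--norm on polynomial coefficients gives $\|g^{\,j}\|_1\le\|g\|_1^{\,j}\le((m+1)\|g\|_\infty)^{\,j}$, so summing the $e+1$ terms yields $\|f\circ g\|_\infty\le(e+1)(m+1)^e\|f\|_\infty\max(1,\|g\|_\infty)^e$; adding the weighted local contributions (the archimedean places carrying total weight $1$) and using $\log((e+1)(m+1)^e)\le e\,m\log 8$ for $e\ge1$, $m\ge2$ gives the displayed bound, after the routine reconciliation of $\log^{+}$ with the bare $\log$ in the definition of $h$.

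\emph{Step 2: induction from the inside.} Put $H_n:=\sup\{h(\psi):\psi\in\mathcal F_n\}$; since the assertion is uniform over $\phi\in\mathcal F_n$ it suffices to bound $H_n$, and the base case $n=1$ is just $H_1\le\max_i h(\phi_i)=h(\mathcal F)$. For $n\ge2$ write an arbitrary element of $\mathcal F_n$ as $\phi=\phi_{w_n}\circ\cdots\circ\phi_{w_1}=\chi\circ\phi_{w_1}$ with $\chi:=\phi_{w_n}\circ\cdots\circ\phi_{w_2}\in\mathcal F_{n-1}$, so that $\deg\chi\le d^{\,n-1}$ and $2\le\deg\phi_{w_1}\le d$. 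Applying Step~1 with $f=\chi$ and $g=\phi_{w_1}$, then $h(\phi_{w_1})\le h(\mathcal F)$ and $h(\chi)\le H_{n-1}$,
\[
h(\phi)\ \le\ (\deg\chi)\,h(\phi_{w_1})+h(\chi)+(\deg\chi)(\deg\phi_{w_1})\log 8\ \le\ d^{\,n-1}h(\mathcal F)+H_{n-1}+d^{\,n}\log 8,
\]
and taking the supremum over $\phi\in\mathcal F_n$ gives the recursion $H_n\le H_{n-1}+d^{\,n-1}h(\mathcal F)+d^{\,n}\log 8$.

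\emph{Step 3: solving the recursion, and the obstacle.} Telescoping from $H_1\le h(\mathcal F)$,
\[
H_n\ \le\ h(\mathcal F)\sum_{j=1}^n d^{\,j-1}+\log 8\sum_{j=2}^n d^{\,j}\ =\ \frac{d^{\,n}-1}{d-1}\,h(\mathcal F)+d^{2}\,\frac{d^{\,n-1}-1}{d-1}\log 8,
\]
which is the stated closed form; the final estimate follows at once since $\frac{d^{\,n}-1}{d-1}\le d^{\,n}$ and $d^{2}\frac{d^{\,n-1}-1}{d-1}\le 2d^{\,n}$ for $d\ge2$, whence $h(\phi)\le d^{\,n}(h(\mathcal F)+2\log 8)=O(d^{\,n}(h(\mathcal F)+1))$. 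Nothing here is deep; the only delicate point is the bookkeeping in Step~1 --- the $\deg g$--dependence of the archimedean correction, the passage between $\log^{+}$ and the $\log$ used to define $h$, and pinning down the constant --- together with the easy but essential choice in Step~2 of expanding $\phi$ from its \emph{innermost} factor. Peeling off the outermost factor instead would give the recursion $H_n\le d\,H_{n-1}+h(\mathcal F)+\text{(error)}$, in which the level-$j$ error is multiplied by $d^{\,n-j}$, costing a spurious factor of $n$ and destroying the clean closed form.
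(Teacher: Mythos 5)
The paper does not actually prove this proposition: it is quoted as ``revisited'' from \cite[Proposition 3.3]{M}, so there is no internal argument to compare against. Your reconstruction is the standard one and it is essentially correct: the single-composition inequality $h(f\circ g)\le (\deg f)h(g)+h(f)+(\deg f)(\deg g)\log 8$, applied by peeling off the \emph{innermost} factor, telescopes to exactly the stated constants $\frac{d^n-1}{d-1}h(\mathcal F)+d^2\frac{d^{n-1}-1}{d-1}\log 8$, and your remark that peeling the outermost factor instead multiplies the level-$j$ error by $d^{n-j}$ and costs a factor of $n$ is a genuine and correct observation. The one point you flag but do not actually resolve deserves more than the label ``routine'': the paper's $h(f)=\sum_v d_v\log\max_i|a_i|_v$ is the projective height of the coefficient vector (no $\log^+$), whereas your local estimates naturally produce $e\sum_v d_v\log^{+}\max_i|b_i|_v$ for the inner polynomial $g$, and this quantity \emph{dominates} $h(g)$ rather than being dominated by it, so the substitution goes the wrong way for an upper bound. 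The discrepancy $\sum_{v:\,|g|_v<1}d_v\log(1/|g|_v)$ is at most the Weil height of the leading coefficient of $g$ (since $|g|_v\ge|b_m|_v$), which is harmless for the paper's actual use case $\mathcal F\subset\mathbb Z[X]$ (where all nonarchimedean local maxima are $\le 1$) and is absorbed into the $O(d^n(h(\mathcal F)+1))$ form in general, but if you want the exact displayed constants over an arbitrary number field you must either build this correction into Step 1 or adopt the $\log^+$ normalization of the height from the outset.
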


\section{Some preliminar results}\label{sec3}

We also will make use of the combinatorial statement below, which has been used and proved in a number of works.
\begin{Lemma}\label{lem3.1}\cite[Lemma 4.8]{OS}
 Let $2 \leq T < N/2$. For any sequence 
 \begin{center}
  $0 \leq n_1 < ... < n_T \leq N$,
 \end{center}there exists $r \leq 2N/T$ such that $n_{i+1}- n_i=r$ for at least $T(T-1)/4N$ values of $i \in \{ 1,...,T-1 \}$.
\end{Lemma}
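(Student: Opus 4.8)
The plan is to prove the statement by a two-stage pigeonhole argument on the consecutive gaps of the sequence. Set $d_i := n_{i+1}-n_i$ for $i \in \{1,\dots,T-1\}$; these are positive integers, and by telescoping
\[
\sum_{i=1}^{T-1} d_i = n_T - n_1 \le N .
\]

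The first stage is an averaging (Markov-type) step, showing that most gaps are small. Let $m$ be the number of indices $i$ with $d_i > 2N/T$. Then $\tfrac{2N}{T}\,m < \sum_{i=1}^{T-1} d_i \le N$, so $m < T/2$, and since $m$ is an integer, $m \le \lfloor (T-1)/2\rfloor$. Hence at least $(T-1) - \lfloor (T-1)/2\rfloor = \lceil (T-1)/2\rceil \ge (T-1)/2$ of the gaps $d_i$ lie in the set $\{1,2,\dots,\lfloor 2N/T\rfloor\}$.

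The second stage is pigeonhole on the possible values. That set has exactly $\lfloor 2N/T\rfloor \le 2N/T$ elements, and at least $(T-1)/2$ of the $d_i$ fall in it, so some value $r \le \lfloor 2N/T\rfloor \le 2N/T$ is attained by at least
\[
\frac{(T-1)/2}{2N/T} = \frac{T(T-1)}{4N}
\]
of the indices $i$; for each such $i$ one has $n_{i+1}-n_i = r$, which is the assertion. The hypothesis $2 \le T < N/2$ enters only to guarantee $\lfloor 2N/T\rfloor \ge 1$ (indeed $\ge 4$), so that the buckets in the pigeonhole step are nonempty and the displayed ratios are meaningful.

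I do not expect a genuine obstacle here: the argument is entirely elementary. The only point that needs a little care is bookkeeping of the constants — arranging that the averaging step loses a factor of at most $2$ (by taking $2N/T$, rather than the true average $N/(T-1)$, as the threshold for a gap being "large") and that the subsequent pigeonhole loses a factor of at most $2N/T$ — so that the two losses combine to give precisely the claimed $T(T-1)/4N$ rather than a weaker bound.
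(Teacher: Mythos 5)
Your argument is correct and complete: the telescoping bound $\sum_i d_i \le N$, the Markov-type count showing at least $(T-1)/2$ gaps are at most $2N/T$, and the final pigeonhole over the $\lfloor 2N/T\rfloor$ possible values together give exactly the claimed $T(T-1)/4N$. The paper itself offers no proof, only the citation to \cite[Lemma 4.8]{OS}, and your two-stage pigeonhole is precisely the standard argument behind that reference, so there is nothing to reconcile.
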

The following result for more general fields is a direct application of the previous lemma.

\begin{Prop}\label{prop3.2}
 Let $K$ be an arbitrary field, $x \in K$ and let $\mathcal{S} \subset K$ be an arbitrary subset of $K$. Suppose there exist a  real number $0< \tau < 1/2$, and also $\Phi$ a sequence of polynomials contained in $\mathcal{F}= \{ \phi_1,.., \phi_k \} \subset K[X]$ such that
 \begin{center}
  $T_{x,\Phi}(N, \mathcal{S})= \tau N \geq 2$.
 \end{center} Then there exists an integer $t \leq 2 \tau^{-1}$ such that
 \begin{center}
  $\# \{ (u,v) \in \mathcal{S}^2 |  (S^n\Phi)^{t} (u)=v \text{ for some } n  \} \geq \dfrac{\tau^2N}{8}$.
 \end{center}

\end{Prop}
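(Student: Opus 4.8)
The plan is to apply Lemma \ref{lem3.1} to the set of return times $n$ with $\Phi^{(n)}(x)\in\mathcal{S}$ and then translate the resulting arithmetic progression into the existence of many pairs $(u,v)$ related by a fixed-length shifted composition. First I would list the return times: by hypothesis $T_{x,\Phi}(N,\mathcal{S})=\tau N\geq 2$, so there are exactly $\tau N$ integers $0\leq n_1<\dots<n_{\tau N}\leq N$ with $\Phi^{(n_i)}(x)\in\mathcal{S}$. Since $0<\tau<1/2$ we have $2\leq T:=\tau N<N/2$, so Lemma \ref{lem3.1} gives an integer $r\leq 2N/T=2\tau^{-1}$ such that $n_{i+1}-n_i=r$ for at least $T(T-1)/(4N)$ values of $i$. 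Set $t:=r\leq 2\tau^{-1}$; this is the $t$ in the statement.

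Next I would produce the pairs. Fix such an index $i$, so $n_{i+1}=n_i+t$, and write $n=n_i$, $u=\Phi^{(n)}(x)$, $v=\Phi^{(n+t)}(x)$; both lie in $\mathcal{S}$ by construction. The key identity is that applying the next $t$ maps of the sequence $\Phi$ to $u$ yields $v$: indeed
\begin{center}
$\Phi^{(n+t)}(x)=\phi_{i_{n+t}}\circ\dots\circ\phi_{i_{n+1}}\bigl(\Phi^{(n)}(x)\bigr)=(S^{n}\Phi)^{(t)}(u)$,
\end{center}
where $(S^{n}\Phi)^{(t)}$ denotes the composition of the first $t$ polynomials in the shifted sequence $S^n\Phi$. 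Hence each admissible $i$ contributes a pair $(u,v)\in\mathcal{S}^2$ with $(S^n\Phi)^{(t)}(u)=v$ for the witness $n=n_i$.

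Finally I would count: there are at least $T(T-1)/(4N)=\tau N(\tau N-1)/(4N)$ admissible indices $i$, and I must check this is at least $\tau^2 N/8$. Since $\tau N\geq 2$ we have $\tau N-1\geq \tfrac12\,\tau N$, so $\tau N(\tau N-1)/(4N)\geq \tau N\cdot\tfrac12\tau N/(4N)=\tau^2 N/8$, as required. It remains to argue that distinct admissible indices give distinct pairs: the values $\Phi^{(n_i)}(x)$ for distinct $i$ are the distinct orbit points at distinct times; if the orbit is injective on these indices the pairs are automatically distinct, but in general one should count \emph{pairs with a witness $n$} — which is what the statement asks, the quantifier ``for some $n$'' already being present — so the count of admissible $i$ directly lower-bounds the number of such pairs only if we instead count triples $(u,v,n)$, or note that the map $i\mapsto(\,\Phi^{(n_i)}(x),\Phi^{(n_i+t)}(x)\,)$ together with the record of $n_i$ is injective. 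The cleanest route, which I would take, is to observe that the set in the statement contains the image of the injective map $i\mapsto(u_i,v_i)$ whenever the orbit values are pairwise distinct, and to reduce to that case (if the orbit is finite, $\mathcal{S}$ intersected with it is finite and the bound is handled separately, or absorbed into the $O$-constant). The main obstacle is precisely this bookkeeping about multiplicity of pairs versus multiplicity of times; everything else is a direct substitution into Lemma \ref{lem3.1} and the elementary inequality $\tau N-1\geq\tfrac12\tau N$.
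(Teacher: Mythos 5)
Your argument is essentially identical to the paper's proof: the same application of Lemma \ref{lem3.1} to the return times, the same identity $\Phi^{(n_j+t)}(x)=(S^{n_j}\Phi)^{(t)}(\Phi^{(n_j)}(x))$, and the same lower bound $T(T-1)/4N\geq\tau^2N/8$ using $\tau N\geq 2$. The multiplicity concern you raise at the end (distinct admissible indices could in principle produce the same pair $(u,v)$, so the count of indices only bounds the number of pairs with multiplicity) is a legitimate observation, but the paper's own proof passes over this point silently in exactly the same way, so your write-up is not missing anything relative to the published argument.
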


\begin{proof}
 Letting $T:= T_{x,\Phi}(N, \mathcal{S})$, we consider all the values $1\leq n_1 <...< n_T \leq N$ such that $\Phi^{(n_i)}(x) \in \mathcal{S}, i=1,...,T-1$.
 
 From the previous lemma, there exists $t \leq 2 \tau^{-1}$ such that the number of $i=1,..., T-1$ with $n_{i+1}- n_i=t$ is at least
 \begin{center}
  $\dfrac{T(T-1)}{4N} = \dfrac{T^2}{4}\left( 1 - \dfrac{1}{T} \right) = \dfrac{\tau^2 N}{4} \left( 1 - \dfrac{1}{T} \right) \geq \dfrac{\tau^2N}{8}$.
 \end{center} Moreover, if $\mathcal{J}:= \{ 1 \leq j \leq T-1 | n_{j+1} - n_j = t \}$,  then
 \begin{center}
  $\Phi^{(n_j)}(x) \in \mathcal{S}$ and $\Phi^{(n_{j+1})}(x)= (S^{n_j}\Phi)^{t}(\Phi^{(n_j)}(x)) \in \mathcal{S}$ for each $ j \in \mathcal{J}$,
 \end{center} and hence 
 \begin{center}
  $\# \{ (u,v) \in \mathcal{S}^2 |  (S^n\Phi)^{t} (u)=v \text{ for some } n  \}  \geq \dfrac{\tau^2N}{8}$.
 \end{center}
\end{proof}

\section{Multiplicative orders in finite fields}\label{sec4}

 In this section we consider $\mathcal{F} = \{\phi_1,...,\phi_k \} \subset \mathbb{Z}[X]$. For a prime $p$ and an integer $s \geq 1$, let $\mathbb{F}_{p^s}$ denote the finite field of $p^s$ elements. Then, for $w \in  \mathbb{F}_{p^s}$ and $\phi_1,..., \phi_k$ defined over $\mathbb{F}_{p^s}$
 \begin{center}
  $T(w):= \# \mathcal{O}_{\mathcal{F}}(w)\leq p^s$.
 \end{center}
 For each $u \in \overline{\mathbb{F}_p}^*$, we define the multiplicative order $\tau(u)$ as the smallest integer $l \geq 1$ such that $u^l=1$. 
 
 For $\Psi$ a sequence of terms in $\mathcal{F}$, we also use the notation
 \begin{center}
  $M_{w,\Psi}(t,N)= \# \{ n \leq N-1 : \tau(\Psi^{(n)}(w)) \leq t) \}$.
  \end{center}
  We use $\mathbb{U}$ to denote the set of all roots of unity in $\mathbb{C}$ and $\Phi_s$ to denote the $s$th
 cyclotomic polynomial. The resultant of two polynomials $F,G \in \mathbb{Z}[X]$ is denoted by Res$(F,G)$. The lemma below is a well known fact.
 \begin{Lemma}\label{lem4.1}\cite[Lemma 2.6]{S}
  For any integers $r,s \geq 1$ and $F \in \mathbb{Z}[X]$, we have 
  \begin{center}
   Res$(\Phi_r,\Phi_s(F))= \exp(O(rs(h(F) + \deg F)))$.
  \end{center}
\end{Lemma}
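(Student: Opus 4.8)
The plan is to unwind the resultant via its classical product formula over roots. Since $\Phi_r$ is monic of degree $\varphi(r)$ with roots exactly the primitive $r$-th roots of unity, and $\Phi_s(F)\in\mathbb{Z}[X]$, we have
\[ \mathrm{Res}(\Phi_r,\Phi_s(F))=\prod_{\zeta}\Phi_s(F(\zeta)), \]
the product running over those $\zeta$; this is a rational integer, and it is nonzero precisely when $\Phi_r$ and $\Phi_s(F)$ are coprime. Every such $\zeta$, and every root of $\Phi_s$, lies on the unit circle, so the whole estimate reduces to an elementary archimedean bound on each factor.

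First I would bound a single factor $\Phi_s(F(\zeta))$. Writing $\Phi_s(Y)=\prod_{\xi}(Y-\xi)$ over the $\varphi(s)$ primitive $s$-th roots of unity $\xi$ and using $|\xi|=1$ gives $|\Phi_s(Y)|\le(|Y|+1)^{\varphi(s)}$ for every $Y$. For $F=\sum_{i=0}^{\deg F}a_iX^i$ with $|\zeta|=1$, the triangle inequality gives $|F(\zeta)|\le\sum_i|a_i|\le(\deg F+1)\|F\|_\infty$; and since $F\in\mathbb{Z}[X]$, its coefficients satisfy $|a_i|\le e^{h(F)}$, so $|F(\zeta)|\le(\deg F+1)e^{h(F)}$. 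Combining the two bounds,
\[ \log\bigl|\Phi_s(F(\zeta))\bigr|\le\varphi(s)\bigl(h(F)+\log(\deg F+2)\bigr). \]

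Then I would multiply over the $\varphi(r)$ roots $\zeta$ and use $\varphi(r)\le r$, $\varphi(s)\le s$, obtaining
\[ \log\bigl|\mathrm{Res}(\Phi_r,\Phi_s(F))\bigr|\le rs\bigl(h(F)+\log(\deg F+2)\bigr)=O\bigl(rs(h(F)+\deg F)\bigr), \]
where the last step absorbs $\log(\deg F+2)$ into $\deg F$, which is legitimate once $\deg F\ge1$ — the only case of interest, since $\Phi_s(F)$ is a constant otherwise. Finally, as the resultant is an integer, when it is nonzero its absolute value is $\ge1$ and hence its logarithm is $\ge0$; this gives the two-sided estimate $\mathrm{Res}(\Phi_r,\Phi_s(F))=\exp(O(rs(h(F)+\deg F)))$ in the form stated.

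I do not expect any genuine obstacle here; the lemma is routine. The only point worth flagging is purely bookkeeping: making sure the crude factors $\deg F+1$, $\log(\deg F+2)$, $\varphi(r)$ and $\varphi(s)$ are all swallowed by the single quantity $rs(h(F)+\deg F)$, together with the harmless restriction to non-constant $F$. As an alternative to the linear-factor bound, one may estimate $|\Phi_s(Y)|$ through the Möbius factorization $\Phi_s(Y)=\prod_{d\mid s}(Y^d-1)^{\mu(s/d)}$ combined with $|Y^d-1|\le2\max(1,|Y|)^d$, but factoring $\Phi_s$ into linear terms over the unit circle is cleanest.
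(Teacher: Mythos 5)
Your argument is correct and is exactly the standard proof of this estimate: the paper itself offers no proof (it simply cites Lemma~2.6 of \cite{S}), and the argument there proceeds the same way, writing $\mathrm{Res}(\Phi_r,\Phi_s(F))=\prod_{\zeta}\Phi_s(F(\zeta))$ over the $\varphi(r)$ primitive $r$-th roots of unity and bounding each factor by $(|F(\zeta)|+1)^{\varphi(s)}$ with $|F(\zeta)|\le(\deg F+1)e^{h(F)}$. Your bookkeeping (absorbing $\log(\deg F+2)$ into $\deg F$, the trivial lower bound $\ge 1$ for a nonzero integer resultant, and the degenerate constant case) is all in order, so there is nothing to add.
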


 We now formulate special cases of results due to Ostafe and Young \cite{OY}.
 
 \begin{Lemma}\label{lem4.2} 
  Let $\mathcal{F}=\{ \phi_1,..., \phi_k \} \in \mathbb{Z}[X]$ be a set of polynomials of respective degrees $d_i \geq 2$ that are not linear conjugate to monomials neither to Chebyshev polynomials (non-special). Then $\phi_i (\mathbb{Q}(\mathbb{U}))$ is finite for each $i$ and so is the set of $u$ is $\mathbb{Q}(\mathbb{U})$ such that $ \cup_{n \geq 0}\mathcal{F}_n(u) \cap \mathbb{U} \neq 0$. 
 \end{Lemma}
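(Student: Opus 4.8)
The plan is to deduce both statements as specializations of the finiteness theorems of Ostafe and Young \cite{OY}. For a finite family $\mathcal{G}$ of non-special polynomials of degree at least $2$ with coefficients in a number field, \cite{OY} shows that the set of cyclotomic points $u \in \mathbb{Q}(\mathbb{U})$ whose forward $\mathcal{G}$-orbit (under nontrivial compositions of the generators) contains a root of unity is finite; among their results is also the finiteness of the set of $\mathcal{G}$-preperiodic roots of unity. Since $\mathcal{F} \subset \mathbb{Z}[X]$, its elements have coefficients in a number field, each $\phi_i$ has degree $d_i \ge 2$, and non-speciality is assumed, so this applies both to each singleton $\{\phi_i\}$ and to $\mathcal{F}$ itself; I would invoke it in these two cases.

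For the first assertion, I would apply \cite{OY} with $\mathcal{G} = \{\phi_i\}$: the set $\{u \in \mathbb{Q}(\mathbb{U}) : \phi_i(u) \in \mathbb{U}\}$ is contained in the (finite) set of $u \in \mathbb{Q}(\mathbb{U})$ whose forward $\phi_i$-orbit meets $\mathbb{U}$, hence is finite, and therefore so is its image under $\phi_i$, which is exactly $\phi_i(\mathbb{Q}(\mathbb{U})) \cap \mathbb{U}$. For the second assertion, I would apply \cite{OY} with $\mathcal{G} = \mathcal{F}$: the set of $u \in \mathbb{Q}(\mathbb{U})$ with $\bigcup_{n \ge 1} \mathcal{F}_n(u) \cap \mathbb{U} \ne \emptyset$ is finite. (Here $\bigcup_{n \ge 0}$ in the statement should be read as the forward orbit under nontrivial compositions, i.e. with $n \ge 1$; otherwise $u \in \bigcup_{n \ge 0}\mathcal{F}_n(u)$ and every root of unity would trivially qualify.)

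The point worth stressing is that the second assertion does \emph{not} follow formally from the first: the set $\bigcup_{n \ge 1}\mathcal{F}_n$ of compositions is infinite, and the backward orbit under $\mathcal{F}$ of a finite set need not remain inside one fixed finite set, so one cannot conclude merely by taking a finite union of the finite sets from the first step. Making the bound uniform in the composition length is precisely what \cite{OY} supplies, and their argument rests on Diophantine and height-theoretic input on the cyclotomic closure $\mathbb{Q}(\mathbb{U})$ together with the non-speciality of the generators; this is the step I expect to be the genuine obstacle, which is why the result is borrowed from \cite{OY} rather than reproved here. The remaining work is the routine verification that our hypotheses match theirs and that the semigroup form of their theorem — not merely its single-map form — is the one being applied.
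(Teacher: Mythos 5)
Your proposal is correct and matches the paper's proof, which is simply the one-line citation ``Use Corollary 2.2 of \cite{OS} and Theorem 1.4 of \cite{OY}'': both assertions are quoted from the known finiteness theorems, exactly as you do. The only cosmetic difference is that you derive the single-polynomial statement (which, as you rightly note, must be read as finiteness of $\phi_i(\mathbb{Q}(\mathbb{U}))\cap\mathbb{U}$) from the semigroup theorem of \cite{OY} applied to the singleton $\{\phi_i\}$, whereas the paper attributes it to \cite[Corollary 2.2]{OS}; your remark that the semigroup statement does not follow from the single-map one by finite unions is a correct and worthwhile clarification.
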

 \begin{proof}
  Use Corollary 2.2 of \cite{OS} and Theorem 1.4 of \cite{OY}.
 \end{proof}

 \begin{Lemma}\label{lem4.3}\cite[Theorem 1.7]{OY}
  Under the conditions of the previous lemma, we have
  \begin{center}
   $\{ \alpha \in \mathbb{Q}(\mathbb{U}) | \phi_{i_s}\circ ... \circ \phi_{i_1}(\alpha) \in \mathcal{F}_l (\phi_{i_s}\circ ... \circ \phi_{i_1}(\alpha)), s \geq 0, l \geq 1  \} $ is finite.
  \end{center}

 \end{Lemma}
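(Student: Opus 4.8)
The plan is to deduce this from \cite[Theorem~1.7]{OY} essentially verbatim, the only thing that needs checking being that the set displayed in the statement really is the set of cyclotomic points that \cite{OY} call preperiodic for the semigroup generated by $\mathcal{F}$. First I would unwind the condition: for $\alpha$ in the set there is a sequence $\Phi=(\phi_{i_j})_j$, an integer $s\ge 0$ and an integer $l\ge 1$ with $\beta:=\Phi^{(s)}(\alpha)\in\mathcal{F}_l(\beta)$, i.e.\ $g(\beta)=\beta$ for some $l$-fold composition $g\in\mathcal{F}_l$. Hence the forward orbit $\alpha,\phi_{i_1}(\alpha),\dots,\beta,g(\beta)=\beta,\dots$ along the branch $(\phi_{i_1},\dots,\phi_{i_s},g,g,\dots)$ is finite and consists of points of $\mathbb{Q}(\mathbb{U})$, so $\alpha$ is precisely a cyclotomic point with finite orbit along some branch of the $\mathcal{F}$-tree, which is the hypothesis of \cite[Theorem~1.7]{OY}; that theorem then yields finiteness of the displayed set.

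If one wanted to indicate why such a statement holds rather than merely cite it, the ordered steps would be: (1) reduce, as above, to cyclotomic points of finite forward orbit along a single branch of $\mathcal{F}$; (2) observe that for such a point the canonical height attached to that branch vanishes, and that this canonical height differs from the naive height $h$ by $O_{\mathcal F}(1)$ uniformly in the branch — here Proposition~\ref{prop2.1} is used to control the heights of the compositions in $\mathcal{F}_l$ — so that all these points have naive height bounded by a constant depending only on $\mathcal{F}$; (3) invoke a Bogomolov-type finiteness for points of small height in $\mathbb{Q}(\mathbb{U})$ attached to a non-special polynomial system. Step~(3) is where the non-special assumption imported from Lemma~\ref{lem4.2} is unavoidable: for $X^d$ or $\pm T_d$ the roots of unity are themselves preperiodic, so the analogous set is infinite, and the content of \cite{OY} is exactly to exclude this in all the remaining cases.

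The \textbf{main obstacle} is step~(3): over $\mathbb{Q}(\mathbb{U})$ a height bound is useless on its own, since this field is very far from Northcott, so one genuinely needs a lower bound of Bogomolov/Amoroso--Zannier type for the canonical height on non-preperiodic cyclotomic points. This is the substantive input of \cite[Theorem~1.7]{OY}, and I would cite it rather than attempt to reprove it. A secondary, purely bookkeeping point is making step~(2) uniform in the branch and in the depths $s$ and $l$ (the naive heights of the compositions in $\mathcal{F}_l$ grow with $l$, but the comparison constant for the relevant canonical height stays bounded in terms of $\mathcal{F}$ alone); with the growth estimate of Proposition~\ref{prop2.1} available this is routine, and in any case it is already subsumed in the cited theorem.
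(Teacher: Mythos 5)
Your proposal is correct and takes essentially the same route as the paper, which proves Lemma~\ref{lem4.3} purely by citation of \cite[Theorem~1.7]{OY}; your unwinding of the condition $\phi_{i_s}\circ\cdots\circ\phi_{i_1}(\alpha)\in\mathcal{F}_l(\phi_{i_s}\circ\cdots\circ\phi_{i_1}(\alpha))$ into ``$\alpha$ is a cyclotomic point preperiodic along some branch of the $\mathcal{F}$-tree'' is exactly the identification needed to apply that theorem. The additional sketch of the canonical-height/Bogomolov mechanism behind \cite{OY} is accurate but goes beyond what the paper records.
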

 Now we want to bound the number of elements of bounded order in an orbit.
 
 \begin{Th}\label{th4.4}
  Let $\mathcal{F}=\{ \phi_1,..., \phi_k \} \in \mathbb{Z}[X]$ be a set of polynomials of respective degrees $d_i \geq 2$ that are not linear conjugate to monomials neither to Chebyshev polynomials (non-special), and $d = \max_i \deg_i$.
  Then, for any fixed $\epsilon >0$,
  
  (i) For any prime $p$ and $t\leq (\log p) ^{1/2 - \epsilon}$, for all initial values $w \in \overline{\mathbb{F}_p}$, we have 
  \begin{center}
   $\displaystyle\sup_{\Psi \text{ sequence in } \mathcal{F}} M_{w, \Psi}(t,N) = O(\max \{N^{1/2} , N/\log \log p \})$.
  \end{center}
  (ii) Let $\Psi$ be a sequence of terms in $\mathcal{F}$. Then for any sufficiently large $P \geq 1$ and $t \leq P^{1/2 - \epsilon}$, for all but $o(P/\log P)$ primes $p \leq P$, for all initial values $w \in \overline{\mathbb{F}}_p$, we have 
  \begin{center}
   $M_{w, \Psi}(t,N) =O(\max \{ N^{1/2}, N/ \log p  \})$
  \end{center}

 \end{Th}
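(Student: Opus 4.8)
The plan is to follow Shparlinski's treatment of the case $k=1$, using Lemma~\ref{lem4.2} in place of the finiteness input available there, the one genuinely new difficulty being to control the (up to $k^{r}$) distinct length-$r$ compositions that can occur. First I would put $\mathcal{S}=\mathcal{S}_{t}:=\{u\in\overline{\mathbb{F}_{p}}^{*}:\tau(u)\le t\}=\bigcup_{a\le t}\{u:\Phi_{a}(u)=0\}$, observe that $M_{w,\Psi}(t,N)=T_{w,\Psi}(N,\mathcal{S})$ up to a boundary term, and assume for contradiction that $M:=M_{w,\Psi}(t,N)>C\max\{N^{1/2},N/\log\log p\}$ for a large constant $C=C(\mathcal{F},\epsilon)$ to be fixed at the end (for part (ii), $\log\log p$ is replaced by $\log p$). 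Discarding the finitely many primes dividing a leading coefficient of some $\phi_{i}$ (and small $p$, where the conclusion is trivial) and passing to a subinterval so that the relevant density is $<1/2$, Proposition~\ref{prop3.2} supplies an integer $1\le r\le 2N/M$ together with at least $M^{2}/(8N)$ pairs $(u,v)\in\mathcal{S}^{2}$ for which $v=(S^{n}\Psi)^{r}(u)$ for some $n$; each pair comes with a witnessing composition $g=(S^{n}\Psi)^{r}\in\mathcal{F}_{r}$ with $g(u)=v$.

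For each such pair and witness, set $a=\tau(u)$, $b=\tau(v)$; both are $\le t$ and prime to $p$ (no $p$-torsion), so $u$ is the reduction of a unique primitive $a$-th root of unity $\zeta$, and $u$ is a common root modulo $p$ of $\Phi_{a}(X)$ and of $\Phi_{b}(g(X))\in\mathbb{Z}[X]$. The dichotomy is on the integer $\rho=\rho(a,b,g):=\operatorname{Res}(\Phi_{a},\Phi_{b}(g))$. If $\rho\ne 0$: since $\Phi_{a}$ is monic and, after our reduction, the leading coefficient of $g$ is prime to $p$, the existence of a common root mod $p$ forces $p\mid\rho$; then Lemma~\ref{lem4.1} and Proposition~\ref{prop2.1} give $\log p\le\log|\rho|=O_{\mathcal{F}}(ab\,d^{r})=O_{\mathcal{F}}(t^{2}d^{r})$. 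For part (i), combined with $t\le(\log p)^{1/2-\epsilon}$ this forces $d^{r}\gg_{\mathcal{F},\epsilon}(\log p)^{2\epsilon}$, hence $r\gg_{\mathcal{F},\epsilon}\log\log p$, and with $r\le 2N/M$ we get $M\ll_{\mathcal{F},\epsilon}N/\log\log p$, contradicting the choice of $C$. For part (ii) I would not discard primes here but bound the exceptional set: such a $p\le P$ divides some nonzero $\operatorname{Res}(\Phi_{a},\Phi_{b}(g))$ with $a,b\le t$ and $g$ a composition of length $\le R:=\lfloor 2\log P/C\rfloor$ from $\mathcal{F}$; summing ``number of prime divisors $\le\log_{2}|\operatorname{Res}|$'' over the admissible triples and inserting the resultant estimate bounds the number of exceptional $p\le P$ by a power of $t$ times $(kd)^{R}$, which for $C$ large (so $(kd)^{R}$ is a small power of $P$) and $t$ in the stated range is $o(P/\log P)$; away from this thin set the case $\rho\ne 0$ cannot occur.

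In the remaining case $\rho=0$ for \emph{every} pair: irreducibility of $\Phi_{a}$ gives $\Phi_{a}\mid\Phi_{b}(g)$ in $\mathbb{Q}[X]$, so $g$ carries every primitive $a$-th root of unity into $\mathbb{U}$; all $\varphi(a)$ of them then lie in the finite exceptional set $\mathcal{W}$ of Lemma~\ref{lem4.2}, so $\varphi(a)\le\#\mathcal{W}$ and $a\le a_{0}(\mathcal{F})$, and $u$ takes only $O_{\mathcal{F}}(1)$ values. To bound $v$, write $g=\psi_{r}\circ\cdots\circ\psi_{1}$ with $\psi_{i}\in\mathcal{F}$ and $\beta_{i}=\psi_{i}\circ\cdots\circ\psi_{1}(\zeta)\in\mathbb{Q}(\mathbb{U})$; since $g(\zeta)=\psi_{r}\circ\cdots\circ\psi_{i+1}(\beta_{i})\in\mathbb{U}$ is a nontrivial forward $\mathcal{F}$-iterate of $\beta_{i}$ for each $i<r$, we get $\beta_{i}\in\mathcal{W}$, and hence $v=\overline{g(\zeta)}=\overline{\psi_{r}(\beta_{r-1})}$ is the reduction of an element of $\{\psi_{l}(w):w\in\mathcal{W},\ l\le k\}$ (and, when $r=1$, $\zeta\in\mathcal{W}$ already, so the same bound holds). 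Thus $v$ takes $O_{\mathcal{F}}(1)$ values, \emph{uniformly in $r$ and $p$}, so the number of pairs is $O_{\mathcal{F}}(1)$, giving $M^{2}/(8N)=O_{\mathcal{F}}(1)$ and $M\ll_{\mathcal{F}}N^{1/2}$, again contradicting the choice of $C$. Choosing $C$ larger than all the implied constants closes both parts; the $\tau\ge 1/2$ (essentially preperiodic) regime and the off-by-ones are handled by an elementary direct argument.

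The step I expect to be the main obstacle is precisely the case $\rho=0$ with $k\ge 2$: when $k=1$ the value $v=f^{(r)}(u)$ is literally a function of $u$, whereas here $v=g(u)$ also depends on $g$, and there are up to $k^{r}$ such $g$ with $r$ possibly as large as a constant times $N^{1/2}$; making the count work requires exactly the observation above, that Lemma~\ref{lem4.2} forces all the intermediate orbit points $\beta_{i}$ into one fixed finite set, so that the number of admissible $v$ — and therefore the number of pairs — is bounded by a constant depending only on $\mathcal{F}$. A secondary, more technical, point is the exceptional-prime count in part (ii), where the size of $C$ must be chosen to absorb the factor $(kd)^{R}$ coming from the compositions.
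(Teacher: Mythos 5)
Your proposal follows the paper's proof essentially step for step: Proposition~\ref{prop3.2} produces $\gg M^2/N$ pairs $(u,v)$ joined by a composition of length $r\le 2N/M$, and the dichotomy on $\operatorname{Res}(\Phi_a,\Phi_b(g))$ is resolved by Lemma~\ref{lem4.1} together with Proposition~\ref{prop2.1} when the resultant is nonzero, and by Lemma~\ref{lem4.2} when it vanishes. Your treatment of the vanishing case is in fact more complete than the paper's: the paper only asserts that finitely many lengths $m_\Psi$ can occur and that at most $c_1$ values of $u$ arise, whereas you show how Lemma~\ref{lem4.2} bounds the order $a$ (hence $u$) and, via the intermediate iterates $\beta_i$, also the value $v$, uniformly in $r$ and in the witnessing composition $g$ --- this is exactly where $k\ge 2$ differs from Shparlinski's $k=1$ case, and your argument supplies the needed uniformity correctly. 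The one step I would not sign off on as written is the exceptional-prime count in part (ii): summing $\Omega(\operatorname{Res}(\Phi_a,\Phi_b(g)))\ll ab\,d^{r}\le t^{2}d^{R}$ over the $\ll t^{2}k^{R}$ admissible triples gives a bound of order $t^{4}(kd)^{R}\ge P^{2-4\epsilon}(kd)^{R}$, which is not $o(P/\log P)$ once $\epsilon<1/4$; either the range of $t$ must be restricted or the union over the pairs $(a,b)$ must be handled more cleverly. Note, however, that the paper's own proof of (ii) only bounds $\Omega$ of a single resultant and never performs this summation at all, so this gap is shared with, and not repaired by, the source.
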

 \begin{proof}
  For $w \in \overline{\mathbb{F}}_p$ and $\Psi$ sequence of terms  in $\mathcal{F}$, denote
  \begin{center}
   $M_{w, \Psi}(t,N):= \rho_{\Psi} N$.
   \end{center} Then there are at least $\rho_\Psi N$ values of $n <N$ with $\Phi_l(\Psi^{(n)}(w))=0$ for some positive integer $l \leq t$.
By Proposition~\ref{prop3.2}, there is some positive integer $m_\Psi \leq 2 \rho_\Psi^{-1}$ such that for at least $\rho_\Psi^2N/8$ values $u \in \overline{\mathbb{F}}^*_p$, we have 
\begin{center}
 $\Phi_s(u)= \Phi_l(\gamma(u))=0$
\end{center} for some pair $(k,l) \in [1,t]^2, \gamma=(S^n\Psi)^{(m_\Psi)} \in \mathcal{F}_{m_\Psi}$. Denote by $R_{s,l,{m_\Psi},\gamma}$ the resultant of the polynomials $\Phi_s(X)$ and $ \Phi_l(\gamma(X))$.
By Lemma~\ref{lem4.2}, there are only finitely many values of ${m_\Psi}$ for which $R_{s,l,{m_\Psi},\gamma}=0$ is possible for some $s, l$ and $\gamma$. Then there are at most $c_1$ ( does not depend on $\Psi$, but only on $\mathcal{F}$) values of $u \in  \overline{\mathbb{F}}_p$
solutions of $R_{s,l,{m_\Psi},\gamma}=0$. If $ \rho_\Psi^2N/8> c_1$, there exists a $(s,l,{m_\Psi}, \gamma)$ such that $p | R_{s,l,{m_\Psi},\gamma} \neq 0$.

If this is the case , then $\rho_\Psi > \sqrt{8c_1/N}$. Using Lemma~\ref{lem4.1} and Proposition~\ref{prop2.1} with $d = \max_i d_i$, we have
\begin{center}
 $\log |R_{s,l,{m_\Psi},\gamma}|=O(sld^{m_\Psi})=O(t^2d^{2{\rho_\Psi}^{-1}})$,
\end{center} and this does not depend on $p$ nor in which $\Psi$ was taken initially.

Thus $\log p = O(t^2d^{2{\rho_\Psi}^{-1}})$, and as $t \leq (\log p)^{1/2 - \epsilon}$, we derive $(\log p)^{2 \epsilon} \leq t^{-2}\log p = O(d^{2 {\rho_\Psi}^{-1}})$, and therefore
 $\rho_\Psi \leq c_2 (\log \log p)^{-1}$. Taking
 \begin{center}
  $\rho_\Psi = \max \{ 3 \sqrt{c_1/N}, 2 c_2 (\log \log p)^{-1} \}$
 \end{center} (for a constant $c_2$ that depends only on $\mathcal{F}$) makes the calculations above induce a contradiction, from where we proved part (i).

 For (ii), denoting by $\Omega(r)$ the number of distinct prime divisors of an integer $r \neq 0$, we see that if $R_{s,l,{m_\Psi},\gamma} \neq 0$, then
\begin{center}
 $\Omega(R_{s,l,{m_\Psi},\gamma}) \leq 2 \log |R_{s,l,{m_\Psi},\gamma}| \leq O(t^2d^{2{\rho_\Psi}^{-1}}) \leq O( P^{1- 2 \epsilon} d^{2{\rho_\Psi}^{-1}})$,
\end{center} and this does not depend on $p$ nor in which $\Psi$ was taken initially.

We note that
$\rho_\Psi \geq \dfrac{2\log d}{\epsilon \log P}$ implies that $\Omega(R_{s,l,{m_\Psi},\gamma}) = o(P/ \log P)$. Therefore, $\displaystyle\max_{\Phi} \rho_\Phi \leq O(1/\log p)$ for all but $o(P/\log P)$ primes in this case.

 \end{proof}

\begin{Cor}
 Let $\mathcal{F}=\{ \phi_1,..., \phi_k \} \in \mathbb{Z}[X]$ be a set of polynomials of respective degrees $d_i \geq 2$ that are not linear conjugate to monomials neither to Chebyshev polynomials (non-special) and $d = \max_i \deg f_i$.
  Then, for any fixed $\epsilon >0$,
  
   For any prime $p$ and $t\leq (\log p) ^{1/2 - \epsilon}$, for all initial values $w \in \overline{\mathbb{F}_p}$, we have 
\begin{align*}
 \# \{ u \in \overline{\mathbb{F}}_p| u= f(w), \tau(u) \leq t ,f \in &\mathcal{F}_n, n \leq N-1 \} \\ &= O(\max \{N^{1/2}k^N, Nk^N/\log \log p \})
\end{align*}

\begin{proof}
 the set $\mathcal{F}_N$ contains $k^N$ polynomials. For each $f \in \mathcal{F}_N$, we can choose a sequence $\Phi$ of terms in $\mathcal{F}$ whose $\Phi^{(N)}= f$, obtaining $k^N$ sequences representing the elements of $\mathcal{F}_N$.
 For each sequence $\Phi$ chosen,  \begin{center} $M_{w, \Psi}(t,N) = O(\max \{N^{1/2} , N/\log \log p \})$ \end{center}
uniformly for any $\Phi$ by Theorem~\ref{th4.4}, or in other words, for each path in the $N$-tree $\mathcal{F}_N$. Since there are $k^N$ paths(polynomials, sequences) in the $n$-tree $\mathcal{F}_N$, this yields
\begin{align*}
 \# \{ u \in \overline{\mathbb{F}}_p| u= f(w), \tau(u) \leq t ,f \in &\mathcal{F}_n, n \leq N-1 \} \\ &= O(\max \{N^{1/2}k^N, Nk^N/\log \log p \})
\end{align*}
\end{proof}
\end{Cor}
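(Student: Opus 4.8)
The plan is to reduce the count over the whole composition tree $\bigcup_{n\le N-1}\mathcal{F}_n$ to a uniform estimate along each individual branch, and then to pay a multiplicative factor $k^N$ for the number of branches. First I would record the combinatorial fact that $\mathcal{F}_N$ is the set of compositions $\phi_{i_N}\circ\cdots\circ\phi_{i_1}$ indexed by the $k^N$ strings $(i_1,\dots,i_N)\in\{1,\dots,k\}^N$, so $\#\mathcal{F}_N\le k^N$, and that every $f\in\mathcal{F}_n$ with $0\le n\le N-1$ arises as a truncation $\Phi^{(n)}$ of at least one sequence $\Phi=(\phi_{i_j})_{j\ge 1}$ whose first $N$ coordinates form one of those strings. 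Fixing one such $\Phi$ for each of the $k^N$ strings gives a family of $k^N$ sequences, and
\[
\bigl\{\,u\in\overline{\mathbb{F}}_p:\ u=f(w),\ \tau(u)\le t,\ f\in\mathcal{F}_n,\ n\le N-1\,\bigr\}\ \subseteq\ \bigcup_{\Phi}\ \bigl\{\,\Phi^{(n)}(w):\ n\le N-1,\ \tau(\Phi^{(n)}(w))\le t\,\bigr\}.
\]

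Next I would apply Theorem~\ref{th4.4}(i) to each of these $k^N$ sequences. For $t\le(\log p)^{1/2-\epsilon}$ that theorem gives, for every sequence $\Psi$ of terms in $\mathcal{F}$ and every $w\in\overline{\mathbb{F}}_p$, the bound $M_{w,\Psi}(t,N)=O(\max\{N^{1/2},\,N/\log\log p\})$, with an implied constant depending only on $\mathcal{F}$ and $\epsilon$; the uniformity in $\Psi$ is exactly the content of the supremum over sequences there. Hence each of the $k^N$ sets on the right-hand side above has at most $O(\max\{N^{1/2},\,N/\log\log p\})$ elements.

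Summing these bounds over the $k^N$ branches and using subadditivity of cardinality under unions yields
\[
\#\bigl\{\,u\in\overline{\mathbb{F}}_p:\ u=f(w),\ \tau(u)\le t,\ f\in\mathcal{F}_n,\ n\le N-1\,\bigr\}\ \le\ k^N\cdot O\!\bigl(\max\{N^{1/2},\,N/\log\log p\}\bigr)\ =\ O\!\bigl(\max\{N^{1/2}k^N,\ Nk^N/\log\log p\}\bigr),
\]
which is the asserted estimate. The union is wasteful --- $\bigcup_{n\le N-1}\mathcal{F}_n$ has fewer than $k^N$ members and distinct pairs $(f,n)$ may produce the same $u$ --- but that only improves, and never worsens, the upper bound. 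I do not expect a genuine obstacle: the argument is a union bound, and its one point of care is that the quantity multiplied by $k^N$ be uniform in the branch $\Psi$ (otherwise the product could blow up), which holds because the $O$-constant in Theorem~\ref{th4.4}(i) depends only on $\mathcal{F}$ and $\epsilon$. The cost of the method is precisely the crude factor $k^N$, so the bound is sharp only in the single-map case $k=1$, where it reduces to Shparlinski's result.
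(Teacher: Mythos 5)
Your proposal is correct and follows essentially the same route as the paper: cover the orbit set by the $k^N$ branches of the composition tree, apply the uniform-in-$\Psi$ bound of Theorem~\ref{th4.4}(i) to each branch, and conclude by a union bound. Your write-up is in fact slightly more careful than the paper's, since you make explicit that every $f\in\mathcal{F}_n$ with $n\le N-1$ is a truncation of one of the chosen length-$N$ sequences and that the implied constant is uniform over branches.
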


\begin{Th}Let $\mathcal{F}=\{ \phi_1,..., \phi_k \} \in \mathbb{Z}[X]$ be a set of polynomials of respective degrees $d_i \geq 2$ that are not linear conjugate to monomials neither to Chebyshev polynomials (non-special) and $d = \max_i \deg_i$, and let $s(w)$ be the minimum number of sequences $\Psi_i$'s of terms in $\mathcal{F}$ such that $\mathcal{O}_{\mathcal{F}}(w)= \mathcal{O}_{\Psi_1}(w) \cup ... \cup \mathcal{O}_{\Psi_{s(w)}}(w)$.
  
  Then for any prime $p$, for all but $O(1)$ initial values $w \in \overline{\mathbb{F}}_p$, we have
  \begin{center}
  $d^{T(w)}\tau(w)^{s(w)} \geq (O(\log p))^{s(w)}$.
  \end{center}

\end{Th}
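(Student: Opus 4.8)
The plan is to combine a resultant-and-height estimate, applied to a single ``return'' occurring inside the orbit of $w$, with a combinatorial bound on minimal trajectory covers of the orbit graph. First I would dispose of the easy range: if $\tau(w)\ge c_0\log p$ for a suitable constant $c_0=c_0(\mathcal F)>0$, then $\tau(w)^{s(w)}\ge (c_0\log p)^{s(w)}$ and $d^{T(w)}\ge 1$, so the inequality holds. So assume $\tau(w)<c_0\log p$ and write $t=\tau(w)$, $T=T(w)$, $s=s(w)$. Since $w\in\overline{\mathbb F}_p^{\,*}$ has order $t$ we have $p\nmid t$; fix a primitive $t$-th root of unity $\zeta\in\overline{\mathbb Q}$ and a prime $\mathfrak p\mid p$ of $\mathbb Z[\zeta]$ with $w\equiv\zeta\pmod{\mathfrak p}$, so that $\Phi_t(\zeta)=0$ and reduction modulo $\mathfrak p$ commutes with the action of every $\phi_j\in\mathbb Z[X]$.

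Next, fix a minimal cover $\mathcal O_{\mathcal F}(w)=\mathcal O_{\Psi_1}(w)\cup\dots\cup\mathcal O_{\Psi_s}(w)$. For each $i$, finiteness of $\mathcal O_{\Psi_i}(w)$ (it has at most $T$ elements) and pigeonhole give integers $0\le a_i<b_i\le \#\mathcal O_{\Psi_i}(w)\le T$ with $\Psi_i^{(a_i)}(w)=\Psi_i^{(b_i)}(w)$. Put $g_i=(S^{a_i}\Psi_i)^{(b_i-a_i)}\in\mathcal F_{b_i-a_i}$ and $H_i(X)=g_i(\Psi_i^{(a_i)}(X))-\Psi_i^{(a_i)}(X)\in\mathbb Z[X]$, so that $\deg H_i\le d^{\,b_i}$ and, by Proposition~\ref{prop2.1}, $h(H_i)=O_{\mathcal F}(d^{\,b_i})$. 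Reducing mod $\mathfrak p$ gives $H_i(\zeta)\equiv 0\pmod{\mathfrak p}$, and since $\Phi_t$ is monic with roots the primitive $t$-th roots of unity, $\mathfrak p\mid \mathrm{Res}(\Phi_t,H_i)$ in $\mathbb Z[\zeta]$, hence $p\mid\mathrm{Res}(\Phi_t,H_i)$. If $\mathrm{Res}(\Phi_t,H_i)=0$ for some $i$ then, $H_i$ being in $\mathbb Q[X]$ and the primitive $t$-th roots of unity being Galois conjugate, $H_i(\zeta)=0$, i.e.\ $\Psi_i^{(a_i)}(\zeta)\in\mathcal F_{b_i-a_i}(\Psi_i^{(a_i)}(\zeta))$; by Lemma~\ref{lem4.3} this confines $\zeta$ to a fixed finite set, so $w$ lies in an $O_{\mathcal F}(1)$-size set of residues, independent of $p$, which I discard. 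For every other $w$, each $\mathrm{Res}(\Phi_t,H_i)$ is a nonzero integer divisible by $p$, so by Lemma~\ref{lem4.1} and the above bounds $\log p\le\log|\mathrm{Res}(\Phi_t,H_i)|=O(t(h(H_i)+\deg H_i))=O_{\mathcal F}(t\,d^{\,b_i})$; thus there is $c=c(\mathcal F)>0$ with
\[
\tau(w)\,d^{\,b_i}\ \ge\ c\log p\qquad(i=1,\dots,s),
\]
and one sets $c_0:=c$ to make the case split consistent.

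To finish, I would invoke a combinatorial estimate on the orbit graph of $w$ — vertices $\mathcal O_{\mathcal F}(w)$, edges $u\mapsto\phi_j(u)$ — of the form $\min_i b_i\le T(w)/s(w)+O_{\mathcal F}(1)$, i.e.\ that in a minimal trajectory cover at least one trajectory ``closes up'' after only about $T(w)/s(w)$ steps. The structural input here is the graph theory of M\'erai and Shparlinski \cite{MS}, together with the fact that all but $O_{\mathcal F}(1)$ vertices have out-degree $\ge 2$ (the out-degree-$1$ vertices are common roots of the finitely many differences $\phi_i-\phi_j$), so the orbit graph is ``branchy''; heuristically, if every trajectory needed for a minimal cover produced its first repeat only after $\gg T/s$ steps, the $s(w)$ (near-disjoint) terminal parts of the orbit would force strictly more than $s\cdot(T/s)=T$ vertices. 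Granting this, applying the displayed inequality to the index $i$ achieving $\min_i b_i$ gives $\tau(w)\,d^{\,T/s+O(1)}\ge c\log p$, i.e.\ $d^{\,T}\tau(w)^{s}\ge(c\,d^{-O(1)}\log p)^{s}$, which is the asserted bound.

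The main obstacle is exactly this last step: quantitatively bounding the overlaps among the trajectories of a minimal cover (equivalently, exhibiting a cover with one ``short'' trajectory of first-return length $O(T(w)/s(w))$). Note that the cruder version $\sum_i b_i\lesssim T(w)$, which would allow one to multiply the $s$ inequalities directly, is false in general, so the $\min$-version above (or a near-minimal cover variant) really is needed. Everything else is the familiar resultant/height machinery, with Lemma~\ref{lem4.3} absorbing the $O(1)$ exceptional initial values $w$, and the reduction of the general case to the non-exceptional range $\tau(w)<c_0\log p$.
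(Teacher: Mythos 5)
Your resultant machinery is exactly the paper's. For each trajectory $\Psi_i$ of a minimal cover you locate the first repetition $\Psi_i^{(b_i)}(w)=\Psi_i^{(a_i)}(w)$ with $b_i\le T(w)$, take the resultant of the cyclotomic polynomial $\Phi_{\tau(w)}$ with $\Psi_i^{(b_i)}-\Psi_i^{(a_i)}$, observe it is divisible by $p$, use Lemma~\ref{lem4.3} to discard the $O(1)$ initial values for which it could vanish, and use Lemma~\ref{lem4.1} with Proposition~\ref{prop2.1} to get $\tau(w)\,d^{\,b_i}\gg\log p$. Up to this point you coincide with the paper, which phrases the same computation as $d^{T_{\Psi}(w)}\tau(w)\ge c\log p$ for every sequence $\Psi$ outside an $O(1)$ exceptional set of $w$.

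The gap is the one you name yourself, and it is genuine: you need a minimal cover containing a trajectory whose first return occurs within roughly $T(w)/s(w)$ steps, i.e.\ $\min_i b_i\le T(w)/s(w)+O(1)$, and you offer only a heuristic. There is no reason for this to hold: minimality of $s(w)$ only says that no proper subfamily of the trajectories covers $\mathcal{O}_{\mathcal{F}}(w)$; it puts no upper bound on overlaps. A minimal cover by $s(w)=2$ trajectories, each of which enumerates all but one (different) point of the orbit before its first repetition, would have $\min_i b_i=T(w)-1$, not $T(w)/2$; so the lemma you would need is false as stated and this step cannot be completed as written. For comparison, the paper does not prove any such combinatorial statement either: it closes by multiplying the $s(w)$ inequalities $d^{T_{\Psi_i}(w)}\tau(w)\ge c\log p$ and asserting $d^{T(w)}\ge\prod_i d^{T_{\Psi_i}(w)}$, i.e.\ $T(w)\ge\sum_i T_{\Psi_i}(w)$, which is the reverse of what the covering $\mathcal{O}_{\mathcal{F}}(w)=\bigcup_i\mathcal{O}_{\Psi_i}(w)$ gives. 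So your instinct that the overlap between trajectories is the crux is correct, but neither your $\min$-lemma nor the paper's product step resolves it; what does follow cleanly from the shared resultant work is only the bound $d^{T(w)}\tau(w)\ge c\log p$ for all but $O(1)$ values of $w$, equivalently $d^{s(w)T(w)}\tau(w)^{s(w)}\ge(c\log p)^{s(w)}$.
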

\begin{proof}
 Given $\Psi$ a sequence of functions from $\mathcal{F}$, $w \in \overline{\mathbb{F}}_p$,  we make for instance the notation $T_\Psi(w):= \# \mathcal{O}_\Psi(w)$.
 
 It can be seen that for some integers $m_\Psi, l_\Psi, n$
 
\begin{center}
 $T(w) \geq T_\Psi(w) \geq  m_\Psi > l_\Psi \geq 0$ and $n=\tau(w)$
\end{center} so that $\Psi^{(m_\Psi)}(w)=\Psi^{(l_\Psi)}(w)$ and $\Phi_n(w)=0$. Taking $Q_{m_\Psi, l_\Psi,n}$ as the resultant of the polynomials $\Psi^{(m_\Psi)}(X)-\Psi^{(l_\Psi)}(X)$ and $\Phi_n(X)$, this implies that 
\begin{center}
 $p | Q_{m_\Psi, l_\Psi,n}$.
\end{center} If $|Q_{m_\Psi, l_\Psi,n}| <p$, then $Q_{m_\Psi, l_\Psi,n}=0$ and thus the polynomials $\Psi^{(m_\Psi)}(X)-\Psi^{(l_\Psi)}(X)$ and $\Phi_n(X)$ have a common root in $\mathbb{C}$. By Lemma~\ref{lem4.3}, there are only $O(1)$ possible values of $n$ for such, and therefore only finitely many possibilities of $w \in \overline{\mathbb{F}}_p$ for such.

Now we point that $d^{m_\Psi}n \leq d^{T_\Psi(w)}\tau(w)$, and use Lemma~\ref{lem4.1} with $r=n, s=1$ to conclude that
\begin{center}
 $|Q_{m_\Psi, l_\Psi,n}|= \exp(O(d^{m_\Psi}n)) \leq \exp(O(d^{T_\Psi(w)}\tau(w)))$, 
\end{center} where $O$ does not depend on which $\Psi$ was initially taken.
Hence, for $c_0$ small enough not depending on $\Psi$, $d^{T_\Psi(w)}\tau(w) \leq c_0 \log p$ implies $|Q_{m_\Psi, l_\Psi,n}| < p$, and then $d^{T_\Psi(w)}\tau(w) \geq O(\log p)$ for all but $O(1)$ values $w \in \overline{\mathbb{F}}_p$, where these constants do not depend on $\Psi$.

Let $s(w)$ be the minimum number of sequences $\Psi_i$'s of terms in $\mathcal{F}$ such that $\mathcal{O}_{\mathcal{F}}(w)= \mathcal{O}_{\Psi_1}(w) \cup ... \cup \mathcal{O}_{\Psi_{s(w)}}(w)$. Thus
\begin{center}
$d^{T(w)}\tau(w)^{s(w)}\geq (d^{T_{\Psi_1}(w)}\tau(w))...(d^{T_{\Psi_{s(w)}}(w)}\tau(w)) \geq (O(\log p))^{s(w)}$
\end{center} for all but $O(1)$ values of $w \in \overline{\mathbb{F}}_p$.
\end{proof}

\section{A graph theory result}\label{sec5} 
Here we present a graph theory result of M\'{e}rai and Shparlinski~\cite{MS} that will be used in our next result.

Let $\mathcal{H}$ be a directed graph with possible multiple edges. Let $\mathcal{V}(\mathcal{H})$ be the set of vertices of $\mathcal{H}$. For $u,v \in \mathcal{V}(\mathcal{H})$, let $d(u,v)$ be the distance from $u$ to $v$, that is, the length of a shortest (directed) path from $u$ to $v$. Assume, that all the vertices have the out-degree $k\geq 1$, and the edges from all vertices are labeled by $\{1,...,k \}$.

For a word $\omega \in \{1,...,k \}^*$ over the alphabet $\{ 1,...,k\}$ and $ u \in \mathcal{V}(\mathcal{H})$, let $\omega(u) \in \mathcal{V}(\mathcal{H})$ be the end point of the walk started from $u$ and following the edges according to $\omega$.

Let us fix $u \in \mathcal{V}(\mathcal{H})$ and a subset $\mathcal{A} \subset \mathcal{V}(\mathcal{H})$. Then for words $\omega_1,..., \omega_l$ put
\begin{align*}
 L_N(u,\mathcal{A}; \omega_1,...,\omega_l)= \# \{v &\in \mathcal{V}(\mathcal{H}) : d(u,v) \leq N,\\ &d(u, \omega_i(v)) \leq N , \omega_i(v) \in \mathcal{A}, i=1,...,l  \}.
\end{align*}
To state the results, for $k,h \geq 1$, let $B(k,h)$ denote the size of the complete $k$-tree of depth $h-1$, that is
\begin{center} $  
B(k,h) = 
     \begin{cases}
       \ h
       \ &\quad\text{if} ~ k=1, \\
       \
       \
       \ \dfrac{k^h-1}{k-1}
       \ &\quad\text{otherwise} ~. \\

     \end{cases}
 $ \end{center}
 
 \begin{Lemma}\label{lem5.1}
  Let $u \in \mathcal{V}(\mathcal{H})$, and $h,l \geq 1$ be fixed. If $\mathcal{A} \subset \mathcal{V}(\mathcal{H})$ is a subset of vertices with
  \begin{align*}
   \#\{ v \in \mathcal{A} : d&(u,v) \leq N \}\\ & \geq \max \left\{ 3B(k,h), \dfrac{3l}{h}\# \{v \in \mathcal{V}(\mathcal{H}) : d(u,v) \leq N \} \right\},
  \end{align*}
then there exist words $\omega_1,...,\omega_l \in \{ 1,...,k \}^*$ of length at most $h$ such that
\begin{center}
 $L_N(u,\mathcal{A}; \omega_1,...,\omega_l) \gg  \dfrac{h}{B(k,h)^{l+1}} \# \{v \in \mathcal{V}(\mathcal{H}) : d(u,v) \leq N \}$,
\end{center} where the implied constant depend only on $l$.

 \end{Lemma}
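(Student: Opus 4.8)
The plan is to follow the pigeonhole-and-iterate strategy that is standard for this type of "many walks land in $\mathcal{A}$" statement. Write $\mathcal{B}_N = \{v \in \mathcal{V}(\mathcal{H}) : d(u,v) \leq N\}$ and $\mathcal{A}_N = \{v \in \mathcal{A} : d(u,v)\leq N\}$, so by hypothesis $\#\mathcal{A}_N \geq \max\{3B(k,h), (3l/h)\#\mathcal{B}_N\}$. The key observation is that for each vertex $v \in \mathcal{A}_N$ there are exactly $B(k,h)$ words of length at most $h-1$ over $\{1,\dots,k\}$ (counting the empty word), hence the ball of radius $h-1$ around $v$, followed forward, meets at most $B(k,h)$ vertices; dually, each vertex $w$ is reached by a walk of length at most $h$ from at most $B(k,h+1)\ll_h 1$ many starting vertices using a fixed word of length $\leq h$. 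The point is to select the words $\omega_1,\dots,\omega_l$ one at a time so that at each stage a positive proportion of the surviving vertices are mapped back into $\mathcal{A}_N$ while staying inside $\mathcal{B}_N$.

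First I would set up the base case: by a counting argument, since $\#\mathcal{A}_N \geq (3l/h)\#\mathcal{B}_N \geq (3/h)\#\mathcal{B}_N$, there is a word $\omega_1$ of length at most $h$ such that the set of $v \in \mathcal{B}_N$ with $d(u,\omega_1(v))\leq N$ and $\omega_1(v)\in\mathcal{A}_N$ has size $\gg \#\mathcal{A}_N / B(k,h) \gg (h^{-1}/B(k,h))\#\mathcal{B}_N$: indeed each vertex $w\in\mathcal{A}_N$ lies at distance $\leq h$ from at least one vertex $v$ in $\mathcal{B}_{N}$ reached by some word of length $\leq h$ (here one uses that walks of length up to $h$ from $\mathcal{B}_N$ stay in $\mathcal{B}_{N}$ only after possibly enlarging $N$ by $h$, which is absorbed because the hypothesis is stated with $\leq N$ and the conclusion with $\leq N$ — this is the technical point to be careful about, and is handled exactly as in \cite{MS}), and summing the inverse-image sizes over the $B(k,h)$ possible words and applying pigeonhole gives one good $\omega_1$. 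Then I would iterate: having chosen $\omega_1,\dots,\omega_{i}$ with a surviving set $\mathcal{S}_i \subset \mathcal{B}_N$ of size $\gg_i (h^{i}/B(k,h)^{i}) \cdot (\text{something})$, apply the same pigeonhole to $\mathcal{S}_i$ against $\mathcal{A}_N$: either $\#\mathcal{S}_i$ is already $\gg_l (h/B(k,h)^{l+1})\#\mathcal{B}_N$ and we can pad with trivial words, or the proportion $\#\mathcal{A}_N$ represents in $\mathcal{B}_N$ forces a word $\omega_{i+1}$ of length $\leq h$ retaining a $\gg 1/B(k,h)$-fraction of $\mathcal{S}_i$ inside the preimage of $\mathcal{A}_N$. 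After $l$ steps the surviving set is $L_N(u,\mathcal{A};\omega_1,\dots,\omega_l)$ and has size $\gg_l (h/B(k,h)^{l+1})\#\mathcal{B}_N$, since we lose one factor of $h/B(k,h)$ at the first step and a factor $\gg 1/B(k,h)$ at each of the remaining $l$ steps, and the condition $\#\mathcal{A}_N\geq 3B(k,h)$ guarantees the counts stay positive (bounded below by an absolute constant) throughout.

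The main obstacle is bookkeeping the radii: a walk of length $\leq h$ from a vertex in $\mathcal{B}_N = \{d(u,\cdot)\leq N\}$ can leave $\mathcal{B}_N$, so one must either work with $\mathcal{B}_{N+lh}$ internally and note $\mathcal{B}_N \subset \mathcal{B}_{N+lh}$ with $\#\mathcal{B}_{N+lh}$ not controlled a priori, or — the correct fix, as in \cite{MS} — impose the constraints $d(u,\omega_i(v))\leq N$ as part of the surviving condition from the outset, so that at every stage $\mathcal{S}_i$ consists of vertices all of whose partial images under $\omega_1,\dots,\omega_i$ lie in $\mathcal{B}_N$; then the pigeonhole is applied only among such "admissible" vertices, and the count $\#\mathcal{A}_N$ (which already sits inside $\mathcal{B}_N$) is the right quantity to distribute. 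Making this precise — in particular checking that the admissibility constraint removes only a controlled fraction at each step, which is where the hypothesis $\#\mathcal{A}_N \geq (3l/h)\#\mathcal{B}_N$ (rather than just $\geq 3B(k,h)$) is genuinely used — is the crux; everything else is the routine $l$-fold pigeonhole iteration with the constant degrading by the claimed power of $B(k,h)$.
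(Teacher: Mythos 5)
The first thing to note is that the paper does not prove Lemma~\ref{lem5.1} at all: it is imported verbatim from M\'erai and Shparlinski \cite{MS}, and the only ``proof'' in the paper is that citation. Your proposal is therefore an attempt to reprove an external result, and as written it is a plan with acknowledged gaps rather than a proof. The most concrete problem is that your bookkeeping does not produce the stated bound. Your first pigeonhole step, as you yourself compute it, yields a set of size $\gg \#\mathcal{A}_N/B(k,h) \geq (3l/h)\,\#\mathcal{B}_N/B(k,h)$, i.e.\ a factor $1/(hB(k,h))$; two sentences later you charge the first step a factor $h/B(k,h)$, which differs from your own computation by $h^2$. The factor of $h$ in the numerator of the conclusion is a genuine \emph{gain} over the naive ``divide by the number of words'' pigeonhole, already visible when $k=1$: there $B(1,h)=h$, the lemma claims $\gg h^{-l}\,\#\mathcal{B}_N$, while your iteration (first step $\gg (3l/h^2)\#\mathcal{B}_N$, then dividing by $\asymp h$ at each later step) gives only $\gg l\,h^{-(l+1)}\#\mathcal{B}_N$, short by a factor $h/(3l)$. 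Your sketch contains no mechanism for recovering that factor; in \cite{MS} it comes from averaging over the $\asymp h$ positions of a window along a path, an idea absent from your outline.

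The second gap is the iteration itself. After choosing $\omega_1$, the surviving set $\mathcal{S}_1=\{v:\ d(u,\omega_1(v))\le N,\ \omega_1(v)\in\mathcal{A}\}$ has no a priori relation to $\mathcal{A}$, so the density hypothesis on $\mathcal{A}$ in the ball cannot simply be ``applied again to $\mathcal{S}_1$'': you would need that a positive proportion of the forward $h$-balls of points of $\mathcal{S}_1$ meet $\mathcal{A}$, which is essentially the content of the lemma and not a consequence of the first step. You flag this yourself (``making this precise \dots{} is the crux'') and defer it to \cite{MS}, which means the proposal reduces the lemma to the very argument it is meant to supply. Two smaller slips: $l$ words give $l$ pigeonhole steps, but you charge the first step plus ``the remaining $l$ steps''; and the assertion that a fixed word of length at most $h$ reaches a given vertex from at most $B(k,h+1)$ starting points is false in a general out-regular directed graph, where in-degrees are unbounded (only the forward count $B(k,h+1)$ is controlled). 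Either reproduce the window-averaging proof of \cite{MS} in full, or do what the paper does and cite it.
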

 \section{Another semigroup result about multiplicative orders}\label{sec6}
\begin{Th}\label{th6.1}
 Let $\mathcal{F}=\{ \phi_1,..., \phi_k \} \in \mathbb{Z}[X]$ be a set of polynomials of respective degrees $d_i \geq 2$ that are not linear conjugate to monomials neither to Chebyshev polynomials (non-special), and $d = \max_i \deg_i$.
Let also $h,l \geq 1$ be  integers such that $h \geq 3l$ and
 $\# \{ v \in  \overline{\mathbb{F}}_p  |\tau(v) \leq t, v = f(u), f \in \mathcal{F}_n, n \leq N   \} \geq 3B(k,h)$.
 Then, for any fixed $\epsilon >0$,
  
  (i) For any prime $p$ and $t\leq (\log p) ^{1/2 - \epsilon}$, for all initial values $u \in \overline{\mathbb{F}_p}$, we have 
  \begin{align*}
   \# \{ v \in \overline{\mathbb{F}}_p| v= f(u), \tau(u) \leq t ,f \in &\mathcal{F}_n, n \leq N \}\\ &\ll_{l, \mathcal{F}} \max \left\{ \dfrac{B(k,h)^{l+1}}{ h} ,  \dfrac{ B(k,h)^{l+1}}{ \log \log p} \right\}
  \end{align*}
  (ii) Let $\Psi$ be a sequence of terms in $\mathcal{F}$. Then for any sufficiently large $P \geq 1$ and $t \leq P^{1/2 - \epsilon}$, for all but $o(P/\log P)$ primes $p \leq P$, for all initial values $u \in \overline{\mathbb{F}}_p$, we have 
  \begin{align*}
   \# \{ v \in \overline{\mathbb{F}}_p| v= f(u), \tau(u) \leq t ,f \in &\mathcal{F}_n, n \leq N \}  \\ &\ll_{l, \mathcal{F}} \max \left\{ \dfrac{B(k,h)^{l+1}}{ h} ,  \dfrac{ B(k,h)^{l+1}}{  \log p} \right\}
  \end{align*}
 
 \end{Th}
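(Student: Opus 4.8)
The plan is to model the $\mathcal F$-orbit of $u$ by the directed graph $\mathcal H$ with vertex set $\mathcal O_{\mathcal F}(u)\subseteq\overline{\mathbb F}_p$ and, for each $i$, an edge labelled $i$ from $v$ to $\phi_i(v)$; this graph has out-degree $k$, a word $\omega\in\{1,\dots,k\}^{*}$ acts by $\omega(v)=g_\omega(v)$ with $g_\omega\in\mathcal F_{|\omega|}$, and $d(u,v)\le N$ holds precisely when $v=f(u)$ for some $f\in\mathcal F_n$ with $n\le N$. Writing $V_N=\#\{v\in\mathcal V(\mathcal H):d(u,v)\le N\}$ and $\mathcal A=\{v\in\mathcal O_{\mathcal F}(u):\tau(v)\le t,\ d(u,v)\le N\}$, the quantity to bound is $M:=\#\mathcal A$, and note $M\le V_N$ while the hypothesis gives $M\ge 3B(k,h)$. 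I would argue by contradiction, assuming $M$ exceeds a large constant depending on $l$ and $\mathcal F$ times $\max\{B(k,h)^{l+1}/h,\ B(k,h)^{l+1}/\log\log p\}$.

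First I would place myself in the range where Lemma~\ref{lem5.1} applies, i.e. where in addition $M\ge\tfrac{3l}{h}V_N$. If this fails, I would first pass to the largest level $N'\le N$ at which the density balance $\#\mathcal A_{N'}\ge\tfrac{3l}{h}V_{N'}$ still holds — such a level exists since the density is $\le 1$ at the root while $h\ge 3l$, and $\mathcal A$ is finite because it consists of roots of unity — establish the bound for $\#\mathcal A_{N'}$, and transfer it back to $\#\mathcal A_N$. Lemma~\ref{lem5.1} then produces words $\omega_1,\dots,\omega_l$ of length $\le h$ with
$L_N(u,\mathcal A;\omega_1,\dots,\omega_l)\gg_{l}\tfrac{h}{B(k,h)^{l+1}}V_N\ \ge\ \tfrac{h}{B(k,h)^{l+1}}M .$

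Next, from each witness $v$ I would extract an arithmetic constraint: every $\omega_i(v)=g_{\omega_i}(v)$ is a primitive $s_i$-th root of unity with $s_i\le t$, so $\Phi_{s_i}(g_{\omega_i}(v))=0$ in $\overline{\mathbb F}_p$. Picking two of the words — ideally with $\omega_j$ a prefix of $\omega_i$, say $\omega_i=\omega_j\sigma$ with $\sigma\in\mathcal F_m$, $m\le h$, so that $\beta:=\omega_j(v)$ satisfies $\Phi_{s_j}(\beta)=0=\Phi_{s_i}(\sigma(\beta))$; otherwise using that $v$ itself is a common zero of $\Phi_{s_i}(g_{\omega_i}(X))$ and $\Phi_{s_j}(g_{\omega_j}(X))$ — I get $p\mid R_v$ for the relevant resultant $R_v$, which by Proposition~\ref{prop2.1}, Lemma~\ref{lem4.1} and the standard resultant height inequality satisfies $\log|R_v|=O_{l,\mathcal F}(t^{2}d^{2h})$ unless $R_v=0$. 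If $R_v\ne 0$, then $\log p\le\log|R_v|=O_{l,\mathcal F}(t^{2}d^{2h})$; with $t\le(\log p)^{1/2-\epsilon}$ this forces $(\log p)^{2\epsilon}=O_{l,\mathcal F}(d^{2h})$, i.e. $\log\log p\ll_{l,\mathcal F,\epsilon}h$. Hence, once $\log\log p$ exceeds a suitable constant multiple of $h$, every witness falls into the case $R_v=0$: there $\Phi_{s_j}\mid\Phi_{s_i}(\sigma(X))$ over $\mathbb Q$, so all primitive $s_j$-th roots of unity lie in the finite Ostafe--Young set $\{\alpha\in\mathbb Q(\mathbb U):g(\alpha)\in\mathbb U\text{ for some }g\in\bigcup_n\mathcal F_n\}$ of Lemma~\ref{lem4.2}, which bounds $s_j$, hence $\beta$, by $O_{\mathcal F}(1)$; since $v\mapsto g_{\omega_j}(v)$ has degree $\le d^{h}$ this caps the number of admissible $v$ at $O_{l,\mathcal F}(d^{h})$ (the incomparable-words case is handled the same way via the simultaneous-preimage finiteness behind Lemmas~\ref{lem4.2}--\ref{lem4.3}). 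Combining $L_N\ll_{l,\mathcal F}d^{h}$ with $L_N\gg_{l}\tfrac{h}{B(k,h)^{l+1}}M$ gives $M\ll_{l,\mathcal F}B(k,h)^{l+1}d^{h}/h$; folding the degree factor into $B(k,h)^{l+1}$ (using $h\ge 3l$ and the shape of $B(k,h)$) yields the first alternative, while the complementary regime $\log\log p\ll h$ is exactly the one in which $B(k,h)^{l+1}/\log\log p$ is the larger of the two bounds and so holds automatically; taking the maximum as in the proof of Theorem~\ref{th4.4}(i) completes (i). Part (ii) is the same argument with "$\log|R_v|$ bounded, hence $\log p$ bounded" replaced by "$\Omega(R_v)\le 2\log|R_v|$ is $o(P/\log P)$, hence $R_v$ has few prime divisors $\le P$", exactly as in Theorem~\ref{th4.4}(ii).

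The arithmetic here is essentially a transcription of the proof of Theorem~\ref{th4.4}; the genuine difficulties are combinatorial — simultaneously meeting the two lower bounds demanded by Lemma~\ref{lem5.1} and transferring a level-$N'$ bound back to level $N$ using that $\mathcal A\subseteq\mathbb U$, and, in the vanishing-resultant case, controlling the degree blow-up $d^{h}$ of level-$h$ compositions against $B(k,h)^{l+1}/h$. That last point is where I expect the main obstacle to lie.
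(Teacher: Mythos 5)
Your arithmetic core (resultants of cyclotomic polynomials with compositions, Lemma~\ref{lem4.1} plus Proposition~\ref{prop2.1} to get $\log p=O(t^2d^{O(h)})$, and the $\Omega(R)$ variant for part (ii)) matches the paper, but the combinatorial setup does not, and the two difficulties you flag at the end are genuine gaps in your version rather than technical points to be smoothed over. The paper's device is to apply Lemma~\ref{lem5.1} with $\mathcal{H}=\mathcal{A}=\Gamma$, where $\Gamma$ is the set of small-order elements itself (vertices are the elements of $\Gamma$, edges $(x,\phi_i(x))$). With that choice the density hypothesis of Lemma~\ref{lem5.1} reads $\#\{v\in\Gamma: d(u,v)\le N\}\ge \tfrac{3l}{h}\#\{v\in\Gamma: d(u,v)\le N\}$, which is automatic from $h\ge 3l$, so only the $3B(k,h)$ hypothesis of the theorem is needed. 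By instead taking $\mathcal{H}$ to be the whole orbit and $\mathcal{A}$ the small-order elements, you must verify $M\ge\tfrac{3l}{h}V_N$, which has no reason to hold; your proposed descent to a level $N'$ where the balance holds can fail to produce any usable level when $u\notin\mathcal{A}$, and even when it succeeds you have no mechanism to transfer an upper bound on $\#\mathcal{A}_{N'}$ back up to $\#\mathcal{A}_N$ (the observation $\mathcal{A}\subseteq\mathbb{U}$ only gives the trivial $O(t^2)$, which is not of the required shape).

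The second gap is quantitative and fatal to your endgame. In the paper's setup every witness $v$ counted by $L_N(u,\Gamma;\omega_1,\dots,\omega_l)$ lies in $\Gamma$, so the relevant resultant is $\mathrm{Res}(\Phi_r(X),\Phi_s(\omega_i(X)))$, and in the degenerate case $R=0$ the common roots are roots of unity $\alpha$ with $\omega_i(\alpha)\in\mathbb{U}$, a set of size $O_{\mathcal F}(1)$ by Lemma~\ref{lem4.2}, \emph{independently of} $h$; hence $L_N\le c_2$ and the contradiction with \eqref{eq6.1} gives exactly $B(k,h)^{l+1}/h$. In your setup the witnesses $v$ need not themselves have small order, so you are forced to pull back through a map of degree up to $d^h$ and land on $L_N\ll_{l,\mathcal F}d^h$, hence $M\ll B(k,h)^{l+1}d^h/h$. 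The factor $d^h$ cannot be ``folded into'' $B(k,h)^{l+1}$: it is exponential in $h$ with base $d$, unrelated to $k$, and the theorem asserts the bound $B(k,h)^{l+1}/h$ with no such loss. Both gaps disappear simultaneously once you make the paper's choice $\mathcal{A}=\mathcal{V}(\mathcal{H})=\Gamma$; with that single change your argument essentially becomes the paper's proof.
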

\begin{proof} We make the notation 
\begin{center}                                   
$\Gamma:= \{ x \in \overline{\mathbb{F}}^*_p : \tau(y) \leq t \}$
\end{center}

 We consider the directed graph with the elements of $\Gamma$ as vertices, and edges $(x,\phi_i(x))$ for $i=1,...,k$ and $x \in \Gamma$. With the notation of Section~\ref{sec5} and Lemma~\ref{lem5.1}, we let $\Gamma$ take the place of $\mathcal{H}$ and $\mathcal{A}$.
 By hypothesis, $l \leq h/3$ and $\#\{ v \in \Gamma, d(u,v) \leq N \} \geq 3B(k,h)$. From Lemma~\ref{lem5.1}, there exist words $\omega_1,...,\omega_l \in \{ 1,...,k\}^*$ of length at most $h$, and therefore degree at most $d^h$, such that
 \begin{equation}\label{eq6.1}
  L_N(u, \Gamma; \omega_1,...,\omega_l) \geq c_1  \dfrac{h}{B(k,h)^{l+1}} \# \{v \in \mathcal{V}(\Gamma) : d(u,v) \leq N \},
 \end{equation} with $c_1$ a positive constant depending only on $l$.
 
 If $v \in L_N(u, \Gamma; \omega_1,...,\omega_l)$, then $\{ v, \omega_i(v) \} \subset \Gamma$ for each $i$. This means that, for a given $i=1,...,l$,
 \begin{center}
  $\Phi_r(v)=\Phi_s(w_i(v))=0$
 \end{center} for some $r,s \in [1,t]^2$.  Denote by $R_{r,s,\omega_i}$ the resultant of the polynomials $\Phi_r(X)$ and $ \Phi_s(\omega_i(X))$.
By Lemma~\ref{lem4.2},  there are at most $c_2$  values of $v \in  \overline{\mathbb{F}}_p$
solutions of $R_{r,s,\omega_i}=0$, and therefore $c_2 \geq L_N(u, \Gamma; \omega_1,...,\omega_l)$. 
If \begin{equation}\label{eq6.2}\# \{v \in \mathcal{V}(\Gamma) : d(u,v) \leq N \} > c_2B(k,h)^{l+1}/c_1 h, \end{equation} there exists a $(r,s,\omega_i)$ such that $p | R_{r,s,\omega_i} \neq 0$.

In this  case , using Lemma~\ref{lem4.1} and Proposition~\ref{prop2.1} with $d = \max_i d_i$, we have
\begin{center}
 $\log |R_{r,s,\omega_i}|=O(rsd^h)=O(t^2d^h)$,
\end{center} and this does not depend on $p$.

Thus, $\log p = O(t^2d^h)$, and as $t \leq (\log p)^{1/2 - \epsilon}$, we derive $(\log p)^{2 \epsilon} \leq t^{-2}\log p = O(d^h)$, and therefore
 $h^{-1} \leq c_3 (\log \log p)^{-1}$. In this case, by (\ref{eq6.1}),
 \begin{align*}
  \# \{v \in \mathcal{V}(\Gamma) : d(u,v) \leq N \} \leq \dfrac{c_2 B(k,h)^{l+1}}{c_1 h} \leq \dfrac{c_2 c_3 B(k,h)^{l+1}}{c_1 \log \log p}.
 \end{align*}

 If
 \begin{center}
  $  \# \{v \in \mathcal{V}(\Gamma) : d(u,v) \leq N \} \geq \max \{ \dfrac{2 c_2B(k,h)^{l+1}}{c_1 h} , 2 \dfrac{c_2 c_3 B(k,h)^{l+1}}{c_1 \log \log p} \}$,
 \end{center} then (\ref{eq6.1}) and (\ref{eq6.2}) yield a contradiction, from where we proved part (i).

 For (ii), denoting by $\Omega(r)$ the number of distinct prime divisors of an integer $r \neq 0$, we see that if $R_{r,s, \omega_i} \neq 0$, then
\begin{center}
 $\Omega(R_{r,s,\omega_i}) \leq 2 \log |R_{r,s,\omega_i}| \leq O(t^2d^h) \leq O( P^{1- 2 \epsilon} d^h)$,
\end{center} and this does not depend on $p$.

We note that
$h^{-1} \geq \dfrac{\log d}{\epsilon \log P}$ implies that $\Omega(R_{r,s,\omega_i} = o(P/ \log P)$. Therefore, $\displaystyle\max_{\Phi} \rho_\Phi \leq O(1/\log p)$ for all but $o(P/\log P)$ primes in this case,
which implies the bound of part (ii).

\end{proof}

\begin{Rem}
If the hypothesis of Theorem~\ref{th6.1} are satisfied with $h= (\log_k N)^{\frac{1}{l+1}}$, then we recover and generalise Theorem 1.2 of \cite{S}.
\end{Rem}

\end{document}